\newtheorem{theorem}{Theorem}[section]
\newtheorem{lemma}[theorem]{Lemma}
\newtheorem{corollary}[theorem]{Corollary}
\newtheorem{remark}[theorem]{Remark}
\theoremstyle{definition}
\newtheorem{definition}[theorem]{Definition}
\newtheorem{problem}[theorem]{Problem}
\newcommand{\field}{\mathbb{F}}
\newcommand{\ffield}{F}
\renewcommand{\vec}[1]{\boldsymbol{#1}}
\newcommand{\mat}[1]{\vec{#1}}
\newcommand{\ZZ}{\ensuremath{\mathbb{Z}}}
\newcommand{\rrspace}{\mathcal{L}}
\newcommand{\rrdim}{\mathfrak{l}}
\newcommand{\ff}{F}
\newcommand{\errors}{\mathcal{E}}
\newcommand{\code}{\mathcal{C}}
\newcommand{\Ls}{\Lambda_s}
\newcommand{\degLs}{\lambda_s}
\newcommand{\degG}{\gamma}
\newcommand{\degR}{\rho}
\newcommand{\Pinf}{P_{\infty}}
\newcommand{\basis}{\vec{\beta}}
\newcommand{\taumax}{\tau_\mathsf{max}}
\newcommand{\eqs}{\epsilon}
\newcommand{\vars}{\nu}
\DeclareMathOperator{\supp}{supp}
\DeclareMathOperator{\im}{im}
\newcommand{\e}{{\vec e}}
\renewcommand{\r}{{\vec r}}
\renewcommand{\c}{{\vec c}}
\title{Improved Power Decoding of \\ Algebraic Geometry Codes}
\author{\IEEEauthorblockN{Sven Puchinger, Johan Rosenkilde, Grigory Solomatov}
\IEEEauthorblockA{
Department of Applied Mathematics and Computer Science, \\
Technical University of Denmark (DTU), Denmark
}}
\begin{document}
\maketitle

\begin{abstract}
  Power decoding is a partial decoding paradigm for arbitrary algebraic geometry codes for decoding beyond half the minimum distance, which usually returns the unique closest codeword, but in rare cases fails to return anything.
  The original version decodes roughly up to the Sudan radius, while an improved version decodes up to the Johnson radius, but has so far been described only for Reed--Solomon and one-point Hermitian codes.
  In this paper we show how the improved version can be applied to any algebraic geometry code.
\end{abstract}

\begin{IEEEkeywords}
Algebraic Geometry Codes, Power Decoding
\end{IEEEkeywords}

\section{Introduction}

\subsection{Related work}

Power decoding was originally proposed in \cite{schmidt_decoding_2006} and \cite{schmidt_syndrome_2010} as a method for decoding Reed--Solomon codes beyond half the minimum distance, achieving roughly the same decoding radius as Sudan's list decoder \cite{sudan_decoding_1997}.
It is a partial unique decoder, meaning that it either returns a unique decoding solution, or declares failure.
It has been demonstrated through numerical simulations that power decoding has a very low probability of failure for random errors;
there are proofs of this for small parameter choices \cite{schmidt_syndrome_2010,nielsen_power_2014}, but the general case remains to be an open problem.

In \cite{rosenkilde_power_2018} it was shown how to incorporate a multiplicity parameter into power decoding, similar to that of Guruswami--Sudan (GS) algorithm \cite{guruswami1998improved}, thereby matching the latter's decoding radius, called the \emph{Johnson radius}.
Both the original and the improved power decoding with multiplicity has been applied to one-point Hermitian codes, which belong to the large family of \emph{algebraic geometry} (AG) \emph{codes} that were introduced by Goppa in 1981 \cite{goppa1981algebraic, goppa1983algebraico}.
They are a natural generalization of Reed--Solomon codes and some of their subfamilies have been shown to have minimal distance that beats the Gilbert--Varshamov bound \cite{tsfasman_modular_1982}.

At the heart of power decoding lies a set of ``key equations'', which can be solved by linear algebra, or by faster, more structured approaches, e.g. by generalizing the Berlekamp--Massey or Euclidean algorithm, see e.g.~\cite{sidorenko_linear_2011,rosenkilde_algorithms_2019} and their references.
This step is algebraically connected with to the ``interpolation step'' of the GS algorithm, which may be computed in similar complexity \cite{chowdhury_faster_2015,rosenkilde_algorithms_2019}.

After interpolation, the GS algorithm proceeds with a ``root-finding step'', which is not required by power decoding. For Reed--Solomon and one-point Hermitian codes, root-finding is asymptotically faster than the interpolation step\footnote{%
  Except if the code length is very short compared to the field size: a sub-routine of the root-finding step is to find $\field_q$-roots in $\field_q[x]$ polynomials.
} \cite{neiger_fast_2017,chowdhury_faster_2015,nielsen_sub-quadratic_2015}, but in hardware-implementations root-finding can take up significant circuit area \cite{ahmed_vlsi_2011}.
For more general AG codes, the root-finding step is not widely studied, and it is unclear whether it is faster than interpolation; e.g. \cite{wu_efficient_2001} describes root-finding for one-point codes which is not superior to performing interpolation using linear algebra.
Further, both power decoding and the GS algorithm generalize to interleaved codes, and here the root-finding step is currently the bottleneck in the GS algorithm \cite{cohn2013approximate}, and so power decoding is asymptotically faster \cite{puchinger2017decoding}.

Recently, \cite{couvreur2020power} generalized the original power decoding to apply to general AG codes.
Their work was principally motivated by their new variant of (the original) power decoding, called power error-locating pairs, which may be applied to attack AG-code-based cryptosystems in a setting where it seems the GS decoder can not be used.

\subsection{Contributions}
In this paper we:
\begin{itemize}
\item formulate improved power decoding \cite{rosenkilde_power_2018} in the language of function fields and adapt it to general AG codes,
\item show how the resulting \emph{key equations} can be solved using linear algebra, and
\item derive the decoding radius for the proposed method under mild assumptions on the code and verify it using numerical simulations. 
\end{itemize}
The most important takeaway is that our decoding radius coincides with the one from \cite{rosenkilde2018power} for Reed--Solomon codes and the one from \cite{puchinger2019improved} for one-point Hermitian codes, achieving the Johnson radius for suitable decoding parameters. Though the computational complexity of our decoder can likely be improved by replacing linear algebra with more advanced techniques, this remains outside of the scope of this paper.

Our function field modelling revolves around ``using an extra place'' $P_\infty$ as in Pellikaan \cite{porter1992decoding}, which allows controlling the ``size'' of functions by their pole order at $P_\infty$.
Our key equations are in the style of Gao \cite{gao_new_2003}, as introduced for power decoding in \cite{nielsen_power_2014}.

We present evidence supporting that our decoder achieves a decoding radius similar to the GS algorithm in two ways: we theoretically derive a radius at which the decoder will surely fail, and then we see in simulations that for fewer random errors, the decoder seems to almost always succeed.
We have no theoretic bound on this failure probability, which has so far proved very challenging even in the case of Reed--Solomon codes, see e.g.~\cite{rosenkilde2018power}.
On the other hand, advantages of the power decoding compared to GS algorithm include:
\begin{itemize}
  \item The power decoder is structurally simpler than the GS algorithm since it does not have a root-finding step, making it easier to implement, possibly faster, and with less footprint in hardware.
\item It is reasonable that a generalization to interleaved AG codes is possible as in \cite{puchinger2017decoding,puchinger2019improved}.
\item Potentially apply power decoding in a more efficient message attack on AG-based cryptosystems than in \cite{couvreur2020power}, possibly by first reformulating it in terms of power error-locating pairs.
\end{itemize}

\section{Preliminaries}
In this section we briefly introduce AG codes as \emph{evaluation codes} and formulate the decoding problem that we wish to solve. We assume that the reader is familiar with the language of function fields, which is presented in great detail in \cite{stichtenoth_algebraic_2009}.

Let $\ff$ be a function field having genus $g$ over a base field $\field$.
Let $P_1,\dots,P_n, P_{\infty}$ be rational places of $\ffield$,
and fix two divisors $D = P_1 + \cdots + P_n$ and $G$ with $\deg G = \gamma$, such that $\supp G \cap \supp D = \emptyset$.
The code that we will be considering is
\[
  \code_{D,G} = \{
  \big(
  f(P_1),\dots,f(P_n)
  \big)
  \in \field^n
  \;|\;
  f \in \rrspace(G)
  \}  \ ,
\]
where
$\rrspace(G) = \{f \in \ffield \;|\; (f) \geq -G\} \cup \{0\}$
is the \emph{Riemann-Roch space} of $G$. The dimension of this code is $k = \rrdim(G)$, where $\rrdim(A)$ denotes the dimension of $\rrspace(A)$ for any divisor $A$; and the code's minimum distance is bounded from below by the \emph{designed minimum distance} $d^* = n - \degG$.

 The problem that we wish to address is as follows:
 \begin{problem}
   \label{prob:decoding}
  Let $f \in \rrspace(G)$ be a \emph{message} and let $\vec{c} \in \code_{D,G}$ be the corresponding \emph{codeword}. Given the \emph{received word}
  \[
    \vec{r} = (r_1,\dots,r_n) = \vec{c} + \vec{e} \ ,
  \]
  where $\vec{e} = (e_1,\dots,e_n) \in \field^n$ is some unknown error, recover $f$.
\end{problem}

In the next section, in the traditional spirit of power decoding, we show how \cref{prob:decoding} can be reformulated as a highly non-linear system of equations.

\section{The Key Equations}
\label{sec:key-eqs}
In this section we formulate the so called \emph{key equations} that lie at the heart of power decoding using the language of function fields.

We begin by defining the \emph{error locator} and the \emph{received word interpolator}, following up with some results about their sizes.

\begin{definition}
  If $\vec{r} = (r_1,\dots,r_n)$ is the received word, then an $\vec{r}$-\emph{interpolator} of degree $\degR$ is an element
  \begin{align*}
    R &\in \rrspace(G + \degR \Pinf) \quad \text{such that} \\
    R &\not \in \rrspace\big(
    G + (\degR-1) \Pinf
    \big) \ ,
  \end{align*}
\end{definition}
and $R(P_i) = r_i$ for $i = 1,\dots,n$.

\begin{lemma}
  There exists an $\r$-interpolator $R$ with degree $\degR$ satisfying $\degR \leq n - \degG + 2g - 1$.
\end{lemma}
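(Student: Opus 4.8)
The plan is to realize $R$ as a preimage of $\r$ under an evaluation map and to bound its pole order at $\Pinf$ using Riemann--Roch. For an integer parameter $t$, consider the $\field$-linear evaluation map
\[
  \ev_t \colon \rrspace(G + t\Pinf) \longrightarrow \field^n, \qquad f \longmapsto \big( f(P_1),\dots,f(P_n) \big),
\]
which is well defined because $\supp(G + t\Pinf) \cap \supp D = \emptyset$. An $\r$-interpolator of degree $\degR$ is precisely a nonzero preimage of $\r$ under $\ev_\degR$ whose exact pole order at $\Pinf$ equals $\degR$, so it suffices to exhibit a single $t \le n - \degG + 2g - 1$ for which $\r \in \im \ev_t$; in fact I will show that $\ev_t$ is surjective already at $t = n - \degG + 2g - 1$.

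The key computation is the dimension of $\im \ev_t$. Since the kernel consists exactly of those functions vanishing at every $P_i$, we have $\ker \ev_t = \rrspace(G + t\Pinf - D)$, and rank--nullity gives
\[
  \dim \im \ev_t = \rrdim(G + t\Pinf) - \rrdim(G + t\Pinf - D).
\]
Setting $t = n - \degG + 2g - 1$ yields $\deg(G + t\Pinf) = n + 2g - 1$ and $\deg(G + t\Pinf - D) = 2g - 1$, both strictly larger than $2g - 2$. Hence both divisors are nonspecial, so Riemann--Roch collapses to $\rrdim(A) = \deg A + 1 - g$ for each of them, whence $\dim \im \ev_t = (n + g) - g = n$. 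Thus $\ev_t$ is surjective, and some $R \in \rrspace(G + t\Pinf)$ satisfies $R(P_i) = r_i$ for all $i$.

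To finish, I would read off the degree. Assuming $\r \ne \vec{0}$ (the only case of interest), the preimage $R$ is nonzero and therefore has a well-defined pole order $\degR$ at $\Pinf$ with $\degR \le t$; by construction $R \in \rrspace(G + \degR\Pinf) \setminus \rrspace(G + (\degR - 1)\Pinf)$, so $R$ is an $\r$-interpolator of degree $\degR \le n - \degG + 2g - 1$. The step I expect to demand the most care is the boundary bookkeeping: one must verify the \emph{strict} inequality $\deg(G + t\Pinf - D) = 2g - 1 > 2g - 2$ so that the index of speciality genuinely vanishes and no spurious $+1$ survives in the dimension count, and one must reconcile the definition's requirement of \emph{exact} pole order $\degR$ with the fact that surjectivity only delivers a function of pole order \emph{at most} $t$. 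Everything else is a routine application of Riemann--Roch.
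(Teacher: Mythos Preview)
Your proof is correct and follows essentially the same route as the paper: both argue that the evaluation map $\rrspace(G+t\Pinf)\to\field^n$ is surjective at $t=n-\degG+2g-1$ via rank--nullity and Riemann--Roch. The only cosmetic difference is that the paper bounds $\rrdim(G+t\Pinf)$ from below by Riemann's inequality while computing $\rrdim(G+t\Pinf-D)$ exactly, whereas you compute both exactly; and you are slightly more explicit than the paper about extracting the \emph{exact} degree $\degR\le t$ from the surjectivity statement and about the harmless $\r=\vec{0}$ edge case.
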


\begin{proof}
  Consider the $\field$-linear map
  \begin{align*}
    \phi :
    \begin{cases}
      \rrspace(G + \degR \Pinf) \rightarrow \field^n \\
      h \mapsto \big(
      h(P_1),\dots,h(P_n)
      \big)
    \end{cases}    
  \end{align*}
  We are guaranteed that $R$ exists if the dimension of the image of $\phi$, which is given by the rank-nullity theorem as
  \[
    \dim \im \phi = \rrdim(G + \degR P_{\infty}) - \rrdim(G + \degR P_{\infty} - D) \ ,
  \]
  is $n$. Indeed, Riemann's theorem theorem says that
  \[
    \rrdim(G + \degR P_{\infty}) \geq \degG + \degR + 1 - g \ ,
  \]
  and if $\degR \geq n - \degG + 2g - 1$, then \cite[Thm 1.5.17]{stichtenoth_algebraic_2009} promises that
  \[
    \rrdim(G + \degR P_{\infty} - D) = \degG + \degR - n + 1 - g \ ,
  \]
  which shows that the image of $\phi$ has dimension $n$.
\end{proof}

\begin{definition}
  Let $E = \sum_{i \in \errors} P_i$, where $\errors = \{i \;|\; e_i \neq 0\}$ is the set of error positions. An \emph{error locator} with multiplicity $s$ and degree $\degLs$ is an element
  \begin{align*}
    \Ls &\in \rrspace(\degLs \Pinf - sE) \quad \text{such that} \\
    \Ls &\not \in \rrspace \big(
          (\degLs-1) \Pinf - sE
          \big) \ .
  \end{align*}
\end{definition}

\begin{lemma}
  \label{lem:degLs}
  For any $\degLs \geq s|\errors| + g$ there exists an error locator with degree at most $\degLs$, however, if $\degLs < s|\errors|$, then no such error locator exists.
\end{lemma}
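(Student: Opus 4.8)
The plan is to reduce both claims to a single statement about the dimension of the Riemann--Roch space $\rrspace(\degLs \Pinf - sE)$ and then invoke Riemann's theorem, exactly as in the previous lemma. The key preliminary observation is that the existence of an error locator of degree \emph{at most} $\degLs$ is equivalent to $\rrspace(\degLs \Pinf - sE) \neq \{0\}$. Indeed, any nonzero $f \in \rrspace(\degLs \Pinf - sE)$ has a well-defined pole order $d = -v_{\Pinf}(f) \leq \degLs$ at $\Pinf$, and then $f$ lies in $\rrspace(d \Pinf - sE)$ but not in $\rrspace\big((d-1)\Pinf - sE\big)$, so $f$ is an error locator of degree $d \leq \degLs$. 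Conversely, an error locator is by definition a nonzero element of such a space. Hence everything comes down to deciding when $\rrdim(\degLs \Pinf - sE) \geq 1$.

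For the existence part, I would apply Riemann's theorem to the divisor $\degLs \Pinf - sE$, whose degree is $\degLs - s|\errors|$ since $\Pinf$ is rational and $\deg E = |\errors|$. This yields
\[
  \rrdim(\degLs \Pinf - sE) \geq (\degLs - s|\errors|) + 1 - g \ .
\]
Whenever $\degLs \geq s|\errors| + g$ the right-hand side is at least $1$, so the space is nonzero and, by the reduction above, an error locator of degree at most $\degLs$ exists.

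For the non-existence part, I would observe that if $\degLs < s|\errors|$ then every divisor $d \Pinf - sE$ with $d \leq \degLs$ has negative degree $d - s|\errors| < 0$. Since a nonzero function has principal divisor of degree $0$, it cannot lie in the Riemann--Roch space of a divisor of negative degree; thus $\rrdim(d \Pinf - sE) = 0$ for all such $d$, and no error locator of degree at most $\degLs$ can exist.

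The argument is a direct application of Riemann's theorem, so I do not anticipate a genuine obstacle. The only point requiring care is the reduction in the first paragraph: one must verify that passing from the ``degree exactly $\degLs$'' condition in the definition to the ``degree at most $\degLs$'' phrasing in the statement is harmless, which holds because the pole order of any fixed nonzero element is well-defined and automatically exhibits that element as an error locator of that exact degree.
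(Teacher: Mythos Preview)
Your proof is correct and follows essentially the same approach as the paper: both establish existence via Riemann's inequality applied to $\degLs \Pinf - sE$, and both establish non-existence by noting that a nonzero function cannot have more zeroes than poles (equivalently, cannot lie in the Riemann--Roch space of a negative-degree divisor). Your version is slightly more explicit in justifying the passage from ``degree exactly $\degLs$'' in the definition to ``degree at most $\degLs$'' in the statement, which the paper leaves implicit.
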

\begin{proof}
  If $\degLs \geq s|\errors| + g$, then the Riemann-Roch theorem promises that
  \[
    \rrdim(\degLs P_{\infty} - sE)
    \geq \degLs - s|\errors| + 1 - g
    \geq 0 \ ,
  \]
  which guarantees existence of an error locator with degree at most $\degLs$.

  If on the other hand $\degLs < s|\errors|$, then $\Ls$ must have more zeroes than poles, which is impossible.
\end{proof}

It will be convenient to define the following divisors:

\begin{align*}
  V_t &= \degLs \Pinf + tG  \ , \\
  Q_t &= \degLs \Pinf + t(G + \degR \Pinf) \quad \text{and} \\
  W_j &= \degLs \Pinf + j(G + \degR \Pinf - D),
\end{align*}
for $j = 0,\dots,s-1$ and $t = 1,\dots,\ell$.

The next lemma relates these divisors to $f$, $\Ls$ and $R$.
\begin{lemma}
  \label{lem:LfR}
  If $f$ is the sent message, $R$ is an $\vec{r}$-interpolator and $\Ls$ is an error locator, then for $t = 1,\dots,\ell$ it holds that
  \[
    \Ls f^t \in \rrspace(V_t) \ ,
  \]
  and for $j = 0,\dots,s-1$ it holds that
  \[
    \Ls (f-R)^jR^{t-j} \in
    \begin{cases}
      \rrspace
      \big(
      W_j + (t-j)(G + \degR \Pinf)
      \big) &\text{ if } j < s\\
      \rrspace(Q_t - sD) &\text{ if } j \geq s 
    \end{cases} \ .
  \]
\end{lemma}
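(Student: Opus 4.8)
The plan is to argue entirely with divisor inequalities, translating each defining condition into a lower bound on a principal divisor and then adding these bounds, using that $h \in \rrspace(A)$ is equivalent to $(h) \geq -A$ and that $(gh) = (g) + (h)$, so lower bounds on the divisors of factors add. The one input that is not purely formal, and which drives everything, is the interpolation property of $R$: since the codeword satisfies $\c = (f(P_1),\dots,f(P_n))$ and $r_i = f(P_i) + e_i$, we have $(f-R)(P_i) = f(P_i) - r_i = -e_i$ for every $i$. Hence $f - R$ vanishes at exactly the non-error positions, i.e. at every $P_i$ with $i \notin \errors$.

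First I would dispose of $\Ls f^t \in \rrspace(V_t)$. From $\Ls \in \rrspace(\degLs\Pinf - sE) \subseteq \rrspace(\degLs\Pinf)$ we get $(\Ls) \geq -\degLs\Pinf$, and from $f \in \rrspace(G)$ we get $(f^t) \geq -tG$; adding yields $(\Ls f^t) \geq -\degLs\Pinf - tG = -V_t$. This part uses neither the zeros of $\Ls$ nor the interpolation property.

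For the second claim I would record three bounds. The error locator gives $(\Ls) \geq -\degLs\Pinf + sE$; the factor $R^{t-j}$ gives $(R^{t-j}) \geq -(t-j)(G+\degR\Pinf)$ since $R \in \rrspace(G+\degR\Pinf)$; and for $(f-R)^j$ I would use the refined bound $(f-R) \geq -(G+\degR\Pinf) + (D-E)$, combining $f-R \in \rrspace(G+\degR\Pinf)$ with the vanishing of $f-R$ on $D - E = \sum_{i \notin \errors} P_i$. This refinement is legitimate precisely because $\supp(G+\degR\Pinf)$ is disjoint from $\supp D$, so the pole part and the extra zeros do not interfere at any $P_i$. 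Summing the three bounds and using $\degLs\Pinf + t(G+\degR\Pinf) = Q_t$ yields the single clean estimate
\[
  (\Ls(f-R)^jR^{t-j}) \;\geq\; -Q_t + jD + (s-j)E \ .
\]

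From here both cases fall out. When $j < s$ the divisor $(s-j)E$ is nonnegative and may be dropped, giving membership in $\rrspace(Q_t - jD)$; since a direct expansion of the definitions shows $W_j + (t-j)(G+\degR\Pinf) = Q_t - jD$, this is exactly the first case. When $j \geq s$ I would instead write $jD + (s-j)E = sD + (j-s)(D-E)$ and note that $(j-s)(D-E) \geq 0$, so dropping it gives $\geq -Q_t + sD$, i.e. membership in $\rrspace(Q_t - sD)$. I expect the only genuinely subtle point to be this $j \geq s$ case: the zeros of $\Ls$ account only for the error positions $E$, which is short of the full $sD$ demanded by $\rrspace(Q_t - sD)$, and one must observe that the surplus vanishing of $f-R$ on the non-error positions covers exactly the missing places $D - E$. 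Verifying the refined divisor bound on $f-R$ carefully — in particular that adding $D-E$ does not clash with $G+\degR\Pinf$ at any $P_i$ — is the main thing to check; the rest is routine bookkeeping.
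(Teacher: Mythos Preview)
Your proposal is correct and follows essentially the same route as the paper: both arguments rest on the single observation that $f-R \in \rrspace(G+\degR\Pinf - (D-E))$, then combine this with the zeros $sE$ of $\Ls$ and the pole bound on $R^{t-j}$. The only cosmetic difference is that you package everything into one divisor inequality $(\Ls(f-R)^jR^{t-j}) \geq -Q_t + jD + (s-j)E$ and then specialise, whereas the paper treats the two cases $j<s$ and $j\geq s$ separately from the outset.
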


\begin{proof}
  The first claim is obvious. For the second claim observe that
  \begin{align*}
    f-R &\in \rrspace
            \big(
            G + \degR \Pinf - D + E
            \big) \ .
  \end{align*}
  If $j < s$, then
  \begin{align*}
    \Ls (f-R)^j &\in \rrspace
                  \big(\degLs \Pinf + j(G + \degR \Pinf) - sE - j(D-E)
                  \big) \\
                &\in \rrspace
                  \big(\degLs \Pinf + j(G + \degR \Pinf - D) - (s-j)E
                  \big) \\
                &\subset \rrspace(W_j) \ .
  \end{align*}
  If on the other hand $j \geq s$, then
  \begin{align*}
    \Ls (f-R)^jR^{t-j} &\in \rrspace
                  \big(
                  \degLs \Pinf + t(G + \degR \Pinf) - sE - j(D-E)
                  \big) \\
                &\subset \rrspace
                  \big(
                  \degLs \Pinf + t(G + \degR \Pinf) - sE - s(D-E)
                  \big) \\
                &= \rrspace(Q_t - sD) \ .
  \end{align*}
\end{proof}

We are now ready to present the key equations, which express an algebraic relationship between the sent message $f$, an $\r$-interpolator $R$ and a corresponding error locator $\Ls$.

\begin{theorem}[The Key Equations]
  \label{thm:key-eqs}
  For $t = 1,\dots,\ell$
  \begin{align}
    \begin{split}
      \label{eq:key}
      \Ls f^t - \sum_{j=0}^{\min\{t,s-1\}} \binom{t}{j} \Ls (f-R)^j R^{t-j} \\
      \in
      \begin{cases}
        \{0\} &\text{ if } 1 \leq t \leq s-1 \\
        \rrspace(Q_t - sD) &\text{ if } s \leq t \leq \ell
      \end{cases} \ .
    \end{split}
  \end{align}
\end{theorem}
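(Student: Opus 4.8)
The plan is to derive both cases of the key equations from a single algebraic identity, namely the binomial expansion of $f^t$ centred at the interpolator $R$. Writing $f = (f-R) + R$ and raising to the $t$-th power gives the identity
\[
  f^t = \sum_{j=0}^{t} \binom{t}{j} (f-R)^j R^{t-j}
\]
in the function field $\ff$. Multiplying both sides by the error locator $\Ls$ yields
\[
  \Ls f^t = \sum_{j=0}^{t} \binom{t}{j} \Ls (f-R)^j R^{t-j} \ .
\]
This identity is the backbone of the argument; everything else is bookkeeping about which summands are kept and where the discarded ones live.

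Next I would split off the low-order terms. Subtracting $\sum_{j=0}^{\min\{t,s-1\}} \binom{t}{j} \Ls (f-R)^j R^{t-j}$ from both sides of the identity leaves exactly the tail
\[
  \Ls f^t - \sum_{j=0}^{\min\{t,s-1\}} \binom{t}{j} \Ls (f-R)^j R^{t-j} = \sum_{j=\min\{t,s-1\}+1}^{t} \binom{t}{j} \Ls (f-R)^j R^{t-j} \ .
\]
For $1 \le t \le s-1$ we have $\min\{t,s-1\} = t$, so the summation range on the right is empty and the expression is $0$, which settles the first case. For $s \le t \le \ell$ we have $\min\{t,s-1\} = s-1$, so the tail runs over $s \le j \le t$, and it remains to locate this tail sum.

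Here I would invoke \cref{lem:LfR}: its $j \ge s$ branch guarantees that each summand $\Ls (f-R)^j R^{t-j}$ lies in $\rrspace(Q_t - sD)$. Since a Riemann--Roch space is an $\field$-vector space, it is closed under the $\field$-linear combination formed by the binomial coefficients, so the entire tail sum lands in $\rrspace(Q_t - sD)$ as well, completing the second case. There is no genuine obstacle in this proof — it is essentially a rearrangement — but the one point worth stating explicitly rather than dismissing as obvious is that the pole bound $Q_t - sD$ supplied by \cref{lem:LfR} is independent of the index $j$ across the whole range $s \le j \le t$; it is precisely this uniformity that lets the distinct summands be merged into a single membership statement.
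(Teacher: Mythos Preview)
Your proof is correct and follows essentially the same route as the paper: expand $\Ls f^t = \Ls\big((f-R)+R\big)^t$ via the binomial theorem, observe that the tail is empty when $t \le s-1$, and for $t \ge s$ invoke \cref{lem:LfR} to place each tail term in $\rrspace(Q_t - sD)$. Your added remarks about closure under $\field$-linear combinations and the $j$-independence of the bound $Q_t - sD$ are helpful clarifications but do not alter the underlying argument.
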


\begin{proof}
  We have that
  \begin{align*}
    \Ls f^t
    &= \Ls
      \big(
      (f-R) + R
      \big)^t \\
    &= \sum_{j=0}^t \binom{t}{j} \Ls (f-R)^j R^{t-j} \ ,
  \end{align*}
  which proves the claim when $1 \leq t \leq s-1$, while if $s \leq t \leq \ell$, then it follows from \cref{lem:LfR} that
  \begin{align*}
    &\Ls f^t - \sum_{j=0}^{\min\{t,s-1\}} \binom{t}{j} \Ls (f-R)^j R^{t-j} \\
    &= \sum_{j=s}^t \binom{t}{j} \Ls (f-R)^j R^{t-j} \in \rrspace(Q_t - sD)
  \end{align*}
\end{proof}

Since \eqref{eq:key} is satisfied by the sent message, it makes sense to look for a solution of \eqref{eq:key} in order to address \cref{prob:decoding}. We explain how this can be done in the next section.

\section{Solving the Key Equations}\label{sec:lin-sys}

In this section we show how to solve the following problem, which is motivated by \cref{thm:key-eqs} and \cref{prob:decoding}:

\begin{problem}
\label{prob:linearized_problem}
  Consider an instance of \cref{prob:decoding}.
  Fix $\degLs\geq 0$ and let $R$ be an $\r$-interpolator.
  Find $\phi_t \in \rrspace(V_t)$ for $t=1,\dots, \ell$ and $\psi_j \in \rrspace(W_j)$ for $j=0,\dots,s-1$, not all zero, such that
\begin{align*}
& \phi_t - \sum_{j=0}^{\min\{t,s-1\}} \binom{t}{j} \psi_j R^{t-j}  \notag\\
&\quad \quad \in \begin{cases}
\{0\} &1 \leq t \leq s-1 \\
\rrspace(Q_t - sD) &s \leq t \leq \ell
\end{cases} \ . \\
\end{align*}
\end{problem}

According to \cref{thm:key-eqs}, if $f$ is the sent message and there exists an error locator $\Ls$ with degree at most $\degLs$, then among the possible solutions of \cref{prob:linearized_problem} we would find
\begin{align}
\phi_t &= \Ls f^t \notag \\
\psi_j &= \Ls(f-R)^j. \label{eq:special_solution}
\end{align}

Since $\deg \Ls$ is roughly proportional to the number of errors (see \cref{lem:degLs}), this means that a solution of \cref{prob:linearized_problem} of the type \eqref{eq:special_solution} for the smallest-possible $\degLs$ corresponds to a codeword that is among the closest codewords to our received word.\footnote{Note that it might happen that for two errors of similar weight, the error locator degree of the one of smaller weight is larger or equal to the one of larger weight. For large enough parameter $s$, this case becomes less likely for random errors.}
It might be that \cref{prob:linearized_problem} for small $\degLs$ has solutions that do not correspond to an error locator/message pair.
This might happen since \cref{prob:linearized_problem} poses weaker constraints on its solutions compared to \eqref{eq:key}. Previous works on (improved) power decoding indicate that such ``spurious'' solutions only occur with very small probability. Also our simulation results in \cref{sec:num_res} indicate this. We will get back to types of failure in the next section.

These arguments motivate our decoding strategy: Find a solution of \cref{prob:linearized_problem} for the smallest $\degLs$ for which the problem has a solution.
If we are successful, we get a solution of \cref{prob:linearized_problem} of the form in \eqref{eq:special_solution} (or a scalar multiple thereof), where $\Ls$ and $f$ correspond to a close codeword to $\r$. We can extract $f$, and thus $\c$ from this solution by division $\tfrac{\phi_1}{\psi_0} = \tfrac{\Ls f^1}{\Ls} = f$.

Since computational complexity is not a primary concern here, we describe in this section how to solve the problem for a fixed $\degLs$. The ``smallest solution'' can then be found by iterating $\degLs$ from $0$ to the maximal possible value for which we expect to be able to decode.

The entire decoding strategy is outlined in Algorithm~\ref{alg:decoder}.

\begin{algorithm}
\DontPrintSemicolon
\KwIn{Received word $\r \in \mathbb{F}^{n}$ and positive integers $s \leq \ell$}
\KwOut{$f \in \rrspace(G)$ such that $\c = (f(P_1),\dots,f(P_n))$ is a codeword with corresponding minimal error locator degree; or ``decoding failure''.}
\For{$\degLs =0,\dots,sn+g$}{
\If{\cref{prob:linearized_problem} with parameter $\degLs$ has a unique solution up to scalar multiples \label{line:check_if_it_has_solution}}{
$(\phi_t)_{t=1,\dots,\ell},(\psi_j)_{j=0,\dots,s-1} \gets$ a solution of \cref{prob:linearized_problem} \label{line:finding_solution} \\
$\hat{f} \gets \tfrac{\phi_1}{\psi_0}$ \\
\If{$\hat{f} \in \rrspace(G)$ and $\psi_0$ is a valid error locator for the error $\hat{\e} = \r-\hat{\c}$, where $\hat{c}$ is the codeword corresponding to $\hat{f}$}{ %
\Return{$\hat{f}$}
}\Else{\Return{``decoding failure''}}
}
}
\Return{``decoding failure''}
\caption{Decoder}
\label{alg:decoder}
\end{algorithm}

In the remainder of this section we give an explicit construction of a matrix $\mat{U}$ over $\field$ whose right kernel contains vectors from which solutions to \cref{prob:linearized_problem} can be obtained, i.e., how to implement Lines~\ref{line:check_if_it_has_solution} and \ref{line:finding_solution} of Algorithm~\ref{alg:decoder}.

 For any divisor $A$ let $\basis_A \in \rrspace(A)^{\rrdim(A)}$ denote a vector whose entries form \emph{any} $\field$-basis of $\rrspace(A)$ such that for any two divisors $A$ and $B$ it holds that
  \[
    \rrspace(A) = \rrspace(B) \iff \basis_A = \basis_B \ .
  \]
  
  For any $h \in \rrspace(A)$ let $h_A \in \field^{\rrdim(A)}$ be the unique vector such that $\basis_A \cdot h_A = h$, and for any $p \in \rrspace(B-A)$ let $p_{B,A} \in \field^{\rrdim(B) \times \rrdim(A)}$ be the matrix whose $i$-th column is $(p a_i)_B$, where $(a_1,\dots,a_{\rrdim(A)}) = \basis_A$. It is easy to see that
  \[
    \basis_B p_{B,A} h_A = p \basis_A \cdot h_A = ph \in \rrspace(B) \ .
  \]

In order to construct $\mat{U}$ we will need a satisfactory received word interpolator $R$ which can be for example be obtained as follows: If $\mat{M} \in \field^{n \times \rrdim(H)}$, where $H = \degR \Pinf + G$, is the matrix whose $i$-th column is
\[
    [b_i(P_1) \ \cdots \ b_i(P_n) ]^T \ ,
  \]
  where $(b_1,\dots,b_{\rrdim(H)}) = \basis_{H}$, then $R = \basis_H \cdot \vec{z}$, where $\vec{z}$ is any solution to the linear system $\mat{M} \vec{z} = \vec{r}$.

The next lemma together with the the subsequent \cref{cor:U} give an explicit construction of $\mat{U}$ and show its relation to \cref{prob:linearized_problem}.
  \begin{lemma}
    \label{lem:U_k}
  Let
  \[
    \vec{u} = (\vec{v}_1|\dots|\vec{v}_{\ell}|\vec{w}_0|\dots|\vec{w}_{s-1})
    \in \field^{\vars} \ ,
  \]
  where
    \begin{align*}
    \vec{v}_r &= (\phi_r)_{V_r} \ , \\
    \vec{w}_j &= (\psi_j)_{W_j} \quad \text{and} \\
    \vars &= \sum_r \rrdim(V_r) + \sum_j \rrdim(W_j)
  \end{align*}
   for $r = 1,\dots,\ell$ and $j = 0,\dots,s-1$, where the $\phi_r,\psi_j$ are a solution to \cref{prob:linearized_problem}. If $\mat{U}_t \in \field^{\rrdim(Q_t) \times \vars}$ is the matrix defined by
  \[
    \mat{U}_t =
    \begin{bmatrix}
      \mat{V}_{t,1} | \cdots | \mat{V}_{t,\ell} | \mat{W}_{t,0} | \cdots | \mat{W}_{t,s-1}  
    \end{bmatrix} \ ,  
  \]
  where
  \begin{align*}    
    \mat{V}_{t,r} &=
                    \begin{cases}
                      1_{Q_t,V_r}
                      &\text{ if } t = r \\
                      0_{Q_t,V_r}
                      &\text{ if } t \neq r \\
                    \end{cases} \quad \text{and}\\
    \mat{W}_{t,j} &=
                    \begin{cases}
                      -\binom{t}{j} (R^{t-j})_{Q_t,W_j}
                      &\text{ if } j \leq t \\
                      0_{Q_t,W_j} &\text{ if } j > t \\
                    \end{cases} \ ,
  \end{align*}
  for $t = 1,\dots,\ell$, then
  \[
    \basis_{Q_t} \mat{U}_t \vec{u} \in 
    \begin{cases}
      \{0\} &\text{ if } 1 \leq t \leq s-1 \\
      \rrspace(Q_t - sD) &\text{ if } s \leq t \leq \ell
    \end{cases} \ .
  \]
\end{lemma}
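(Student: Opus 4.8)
The plan is to show by direct computation that the function $\basis_{Q_t}\mat{U}_t\vec{u}$ is exactly the left-hand side of the constraint in \cref{prob:linearized_problem}, so that the claim follows at once from the hypothesis that the $\phi_r,\psi_j$ form a solution of that problem. The main tool is the multiplication identity $\basis_B\, p_{B,A}\, h_A = ph$ established just before the lemma, which I will apply block by block to the partitioned product $\mat{U}_t\vec{u} = \sum_{r=1}^{\ell}\mat{V}_{t,r}\vec{v}_r + \sum_{j=0}^{s-1}\mat{W}_{t,j}\vec{w}_j$.

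Before invoking that identity I must check that the two kinds of multiplication matrices appearing in $\mat{U}_t$ are well defined, i.e.\ that the multiplying functions lie in the Riemann--Roch spaces demanded by the notation $p_{B,A}$ (namely $p \in \rrspace(B-A)$). For the block $\mat{V}_{t,t} = 1_{Q_t,V_t}$ I need $1 \in \rrspace(Q_t - V_t)$, which holds since $Q_t - V_t = t\degR\Pinf \geq 0$. For the block $\mat{W}_{t,j} = -\binom{t}{j}(R^{t-j})_{Q_t,W_j}$ with $j \leq t$ I need $R^{t-j} \in \rrspace(Q_t - W_j)$; since $R$ is an $\r$-interpolator we have $R \in \rrspace(G + \degR\Pinf)$, hence $R^{t-j} \in \rrspace\big((t-j)(G+\degR\Pinf)\big)$, and the computation $Q_t - W_j = (t-j)(G+\degR\Pinf) + jD$ shows $(t-j)(G+\degR\Pinf) \leq Q_t - W_j$ because $jD \geq 0$. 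I expect these two divisor inequalities to be the only real substance of the proof; everything else is bookkeeping.

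With well-definedness in hand, the multiplication identity gives $\basis_{Q_t}\mat{V}_{t,t}\vec{v}_t = \phi_t$ (the only nonzero $\mat{V}$-block, occurring at $r=t$) and $\basis_{Q_t}\mat{W}_{t,j}\vec{w}_j = -\binom{t}{j}R^{t-j}\psi_j$ for each $j$ with $0 \leq j \leq \min\{t,s-1\}$, the blocks with $j > t$ vanishing by definition. Summing over the blocks yields
\[
  \basis_{Q_t}\mat{U}_t\vec{u} = \phi_t - \sum_{j=0}^{\min\{t,s-1\}}\binom{t}{j}\psi_j R^{t-j},
\]
which is precisely the expression constrained in \cref{prob:linearized_problem}. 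Since the $\phi_r,\psi_j$ are by assumption a solution of that problem, this quantity lies in $\{0\}$ for $1 \leq t \leq s-1$ and in $\rrspace(Q_t - sD)$ for $s \leq t \leq \ell$, which is exactly the asserted conclusion.
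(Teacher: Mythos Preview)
Your proof is correct and follows exactly the paper's approach: expand $\basis_{Q_t}\mat{U}_t\vec{u}$ block by block via the identity $\basis_B p_{B,A} h_A = ph$ to recover the left-hand side of the constraint in \cref{prob:linearized_problem}, and then invoke the hypothesis. Your additional verification that the multiplication matrices $1_{Q_t,V_t}$ and $(R^{t-j})_{Q_t,W_j}$ are well defined (via the divisor inequalities $Q_t - V_t = t\degR\Pinf \geq 0$ and $Q_t - W_j = (t-j)(G+\degR\Pinf) + jD \geq (t-j)(G+\degR\Pinf)$) is a welcome detail that the paper leaves implicit.
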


\begin{proof}
  Simply observe that
  \begin{align*}
    \basis_{Q_t} \mat{U}_t \vec{u}
    &= \basis_{Q_t} \sum_{r=1}^{\ell} \mat{V}_{t,r}\vec{v}_r + \basis_{Q_t} \sum_{j=0}^{s-1} \mat{W}_{t,j} \vec{w}_j \\
    &= \phi_t - \sum_{j=0}^{\min\{t,s-1\}} \binom{t}{j} \psi_j R^{t-j} \ .
  \end{align*}
\end{proof}

\begin{corollary}[of \cref{lem:U_k}]
  \label{cor:U}
  If
  \[
    \mat{U} = \left[
      \begin{array}{cc}
        \mat{K}_1 \mat{U}_1 \\
        \hline 
        \vdots \\
        \hline 
        \mat{K}_{\ell} \mat{U}_{\ell}
      \end{array}\right] \ ,
  \]
  where  
  \[
    \mat{K}_t = 
    \begin{cases}
      1_{Q_t,Q_t} &\text{ if } 1 \leq t \leq s-1 \\
      \text{left kernel matrix of }1_{Q_t,Q_t - sD} &\text{ if } s \leq t \leq \ell
    \end{cases} \ 
  \]
  and $\vec{u}$ is as in \cref{lem:U_k}, then 
  \begin{eqnarray}
    \label{eq:Uv=0}
    \mat{U} \vec{u} = \vec{0} \ .
  \end{eqnarray}
\end{corollary}

In the next section we describe the conditions under which decoding is expected to fail.

\section{Decoding Radius}
\label{sec:decoding-radius}

We turn to investigating the decoding performance of the proposed decoder.
Since the decoder returns at most one codeword while attempting to decode beyond half the minimum distance, it must obviously fail for certain received words.
Theoretically bounding this probability seems a difficult problem even for the simplest case of Reed--Solomon codes, and only partial results are known \cite{schmidt_decoding_2006,rosenkilde_power_2018}.
We follow the approach of several previous papers on power decoding: we derive theoretically a number of errors at which our decoder is always guaranteed to fail, and we \emph{define} this to be the ``decoding radius''; this name will be supported by simulations in \cref{sec:num_res} which indicates that the decoding algorithm succeeds with high probability whenever fewer, random errors occur.

It can be seen from Algorithm~\ref{alg:decoder} that our decoder declares a decoding failure if and only if there is no value of $\degLs$ for which the matrix $\mat U$ in \cref{cor:U} has a right kernel of $\field_q$-dimension exactly 1.

On the other hand, when we do \emph{not} declare decoding failure, the returned message candidate $\hat f \in \mathcal L(G)$ must correspond to the closest codeword to the received word, as in the discussion in the preceding section.

This notion of decoding failure motivates a natural bound for when we -- simply by linear algebraic arguments -- would know a priori that the decoder will fail in the generic case when the smallest error locator corresponding to a codeword has degree $\degLs = s |\mathcal{E}| + g$ (and not smaller), where $|\mathcal{E}|$ is the distance of the codeword to the received word:
\begin{definition}
\label{def:decoding_radius}
For a given code $\mathcal C$, the \emph{decoding radius} of power decoding, denoted $\taumax$, is the greatest value of $\tau$ such that for $\degLs=s \tau+g$, if $\eqs, \vars$ are the row resp. column dimensions of the matrix $\mat U$ of \cref{lem:U_k}, then $\vars > \eqs + 1$.
\end{definition}
Since the proposed decoder will typically return no solutions (and therefore fail) when $|\errors| > \tau$, the decoding radius tells us how many errors we should at most expect to correct.\footnote{It might succeed in rare cases in which the error locator has surprisingly low degree. Our simulation results suggest that this happens with very small probability.}
We reiterate that, at this point we have given no indications that the decoder should usually succeed up to $\taumax$ errors, but we address this by simulations in \cref{sec:num_res}.

For a given code $\mathcal C$, one may compute $\taumax$ exactly, but since the equations involve dimensions of sequences of Riemann--Roch spaces, it seems impossible to give a general precise closed form.
However, we may lower-bound these numbers using common function field tools:

\begin{lemma}
  \label{lem:div-sims}
  For $t = 1,\dots,\ell$ and $j = 0, \dots, s-1$ it holds that
  \begin{align*}
    \rrdim(V_t) &\geq \degLs + t - g + 1 \ , \\
    \rrdim(W_j) &\geq \degLs + j(\degG + \degR - n) - g + 1 \ , \\    
    \rrdim(Q_t - sD) &\geq \degLs + t(\degG + \degR) - sn - g + 1 \ ,
  \end{align*} and if $\degG \geq 2g - 2$, then 
  \[
    \rrdim(Q_t) = \degLs + t(\degG + \degR) - g + 1 \ .
  \]
\end{lemma}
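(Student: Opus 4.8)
The plan is to recognize that all four statements are direct applications of the two standard consequences of the Riemann--Roch theorem: the general inequality $\rrdim(A) \geq \deg A + 1 - g$ valid for every divisor $A$, and the exact equality $\rrdim(A) = \deg A + 1 - g$ valid whenever $\deg A \geq 2g-1$ (equivalently, whenever $A$ is non-special, which is guaranteed by $\deg A > 2g-2$). So the whole proof reduces to computing the degrees of the four divisors involved and invoking the appropriate one of these two facts. First I would record the degrees. Using $\deg \Pinf = 1$ (since $\Pinf$ is rational), $\deg G = \degG$, and $\deg D = n$, the definitions give
\[
\deg V_t = \degLs + t\degG, \quad
\deg W_j = \degLs + j(\degG + \degR - n), \quad
\deg(Q_t - sD) = \degLs + t(\degG + \degR) - sn,
\]
and $\deg Q_t = \degLs + t(\degG + \degR)$.

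For the first three inequalities I would simply apply $\rrdim(A) \geq \deg A + 1 - g$ to $A = V_t$, $A = W_j$, and $A = Q_t - sD$ respectively; substituting the degrees above yields exactly the three claimed lower bounds, so these are immediate and require no hypothesis. For the final equality I would need the exactness half of Riemann--Roch, which demands $\deg Q_t \geq 2g-1$. Here is where the hypothesis $\degG \geq 2g-2$ enters: since $t \geq 1$, $\degR \geq 0$, and $\degLs \geq 0$, we have
\[
\deg Q_t = \degLs + t(\degG + \degR) \geq \degG \geq 2g-2,
\]
which gives $\deg Q_t \geq 2g-2$ but falls one short of the $2g-1$ threshold. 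I would therefore sharpen the bound slightly: because $R$ is an $\r$-interpolator of degree $\degR$, one expects $\degR \geq 1$ in any nontrivial decoding instance (indeed $\degR$ grows with $n$ by the earlier lemma), so $\deg Q_t \geq \degG + \degR \geq 2g-1$; alternatively, if $\degLs \geq 1$ or $t \geq 1$ with $\degR \geq 1$ the strict inequality $\deg Q_t > 2g-2$ already suffices since it forces $\deg Q_t \geq 2g-1$ for integer degrees. Either way, once $\deg Q_t \geq 2g-1$ is established, $\rrdim(Q_t) = \deg Q_t + 1 - g = \degLs + t(\degG + \degR) - g + 1$ exactly, as claimed.

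The only genuinely delicate point is this boundary case in the last equality: the bare hypothesis $\degG \geq 2g-2$ together with the weakest possible assumptions $\degLs = 0$, $\degR = 0$ would give $\deg Q_t = \degG \geq 2g-2$, which is the special-divisor borderline where equality in Riemann--Roch can fail. I expect the intended reading is that the standing decoding parameters guarantee $\deg Q_t \geq 2g-1$ (via $\degR > 0$, which holds for any real code), so the main obstacle is merely to state this cleanly rather than to overcome a real difficulty. Everything else is a one-line degree computation feeding into a cited theorem.
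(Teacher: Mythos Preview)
Your approach is correct and matches the paper's one-line proof: the Riemann--Roch inequality for the three lower bounds, and the exact form (Stichtenoth, Thm~1.5.17) for $\rrdim(Q_t)$ under the hypothesis $\degG \geq 2g-2$. Your discussion of the boundary case $\deg Q_t = 2g-2$ is in fact more careful than the paper, which simply invokes $\degG \geq 2g-2$ without explicitly verifying $\deg Q_t > 2g-2$; note also that your degree computation yields $\rrdim(V_t) \geq \degLs + t\degG - g + 1$, which is precisely what the subsequent lemma uses---the printed ``$+\,t$'' in the statement is evidently a typo for ``$+\,t\degG$''.
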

\begin{proof}
  The lower bounds on $\rrdim(V_t)$, $\rrdim(W_j)$ and $\rrdim(Q_t - sD)$ are given by the Riemann-Roch theorem, while the exact value of $\rrdim(Q_t)$ is given by \cite[Thm 1.5.17]{stichtenoth_algebraic_2009} since $\degG \geq 2g - 2$.
\end{proof}

The next lemma gives bounds on the dimensions of $\mat{U}$.

\begin{lemma}
  \label{lem:U-dims}
  If $\degG \geq 2g - 2$, then $\mat{U} \in \field^{\eqs \times \vars}$, where
  \begin{align*}
    \eqs &\leq (s-1)(\degLs-g+1) + \tfrac{s(s-1)}{2}(\degG+\degR) + (\ell-s+1)sn \ , \\
    \vars &\geq (\ell+s)(\degLs-g+1)+\tfrac{\ell(\ell+1)}{2}\degG + \tfrac{s(s-1)}{2}(\degG+\degR-n) \ .
  \end{align*}
\end{lemma}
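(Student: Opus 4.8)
The plan is to read off the row count $\eqs$ and column count $\vars$ of $\mat U$ directly from the block description in \cref{cor:U}, and then substitute the Riemann--Roch estimates of \cref{lem:div-sims}, choosing the direction of each estimate so that the two inequalities point the right way. The columns are the straightforward part. Every block $\mat U_t$ shares the same column partition into the $\rrdim(V_r)$-wide blocks $\mat V_{t,r}$ and the $\rrdim(W_j)$-wide blocks $\mat W_{t,j}$, so stacking the $\mat K_t\mat U_t$ does not change the number of columns and $\vars = \sum_{r=1}^{\ell}\rrdim(V_r) + \sum_{j=0}^{s-1}\rrdim(W_j)$, exactly the quantity appearing in \cref{lem:U_k}. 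First I would lower-bound each summand by the Riemann--Roch bounds of \cref{lem:div-sims}, namely $\rrdim(V_r)\geq \degLs + r\degG - g + 1$ and $\rrdim(W_j)\geq \degLs + j(\degG+\degR-n) - g + 1$, and then collect the arithmetic sums $\sum_{r=1}^{\ell} r = \tfrac{\ell(\ell+1)}{2}$ and $\sum_{j=0}^{s-1} j = \tfrac{s(s-1)}{2}$; this produces the $(\ell+s)(\degLs - g + 1)$, the $\tfrac{\ell(\ell+1)}{2}\degG$, and the $\tfrac{s(s-1)}{2}(\degG+\degR-n)$ terms, giving the claimed lower bound on $\vars$.

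For the rows, the count of $\mat U$ is the sum of the row counts of the blocks $\mat K_t\mat U_t$, which equals the number of rows of $\mat K_t$. When $1\leq t\leq s-1$ we have $\mat K_t = 1_{Q_t,Q_t}$, the $\rrdim(Q_t)\times\rrdim(Q_t)$ identity, contributing $\rrdim(Q_t)$ rows. When $s\leq t\leq\ell$, $\mat K_t$ is a basis matrix of the left kernel of $1_{Q_t,Q_t-sD}$, the matrix of the inclusion $\rrspace(Q_t-sD)\hookrightarrow\rrspace(Q_t)$ (legitimate since $sD\geq 0$, so $1\in\rrspace(sD)$). This inclusion is injective, so $1_{Q_t,Q_t-sD}$ has full column rank $\rrdim(Q_t-sD)$ and its left kernel has dimension $\rrdim(Q_t)-\rrdim(Q_t-sD)$; hence this block contributes exactly $\rrdim(Q_t)-\rrdim(Q_t-sD)$ rows. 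Summing, $\eqs = \sum_{t=1}^{s-1}\rrdim(Q_t) + \sum_{t=s}^{\ell}\big(\rrdim(Q_t)-\rrdim(Q_t-sD)\big)$.

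To pass to the stated upper bound I would use the \emph{exact} value $\rrdim(Q_t) = \degLs + t(\degG+\degR) - g + 1$ supplied by \cref{lem:div-sims} under the hypothesis $\degG\geq 2g-2$. For the first sum this gives $(s-1)(\degLs-g+1)+\tfrac{s(s-1)}{2}(\degG+\degR)$ after evaluating $\sum_{t=1}^{s-1} t$. For the second sum I would combine the exact $\rrdim(Q_t)$ with the Riemann--Roch \emph{lower} bound $\rrdim(Q_t-sD)\geq \degLs + t(\degG+\degR)-sn-g+1$: the two leading expressions cancel and leave $\rrdim(Q_t)-\rrdim(Q_t-sD)\leq sn$ for each $t$, so the tail sums to at most $(\ell-s+1)sn$. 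Adding the two contributions yields the asserted upper bound on $\eqs$.

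The only genuinely delicate step is the row count of the left-kernel blocks: one must recognize that the height of $\mat K_t\mat U_t$ is the left-kernel dimension and identify this with $\rrdim(Q_t)-\rrdim(Q_t-sD)$ through injectivity of the inclusion map. This is also where the hypothesis $\degG\geq 2g-2$ is indispensable, since upper-bounding the difference $\rrdim(Q_t)-\rrdim(Q_t-sD)$ requires an \emph{upper} bound on $\rrdim(Q_t)$ --- supplied only by the exact formula of \cref{lem:div-sims} --- paired with the lower bound on $\rrdim(Q_t-sD)$. Everything else reduces to summing the arithmetic progressions already displayed.
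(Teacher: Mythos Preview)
Your proposal is correct and follows essentially the same approach as the paper's own proof: express $\vars$ and $\eqs$ in terms of the dimensions $\rrdim(V_t)$, $\rrdim(W_j)$, $\rrdim(Q_t)$, $\rrdim(Q_t-sD)$, then plug in the Riemann--Roch estimates of \cref{lem:div-sims} and sum the resulting arithmetic progressions. Your write-up is in fact slightly more careful than the paper's, since you justify explicitly why the left-kernel block contributes $\rrdim(Q_t)-\rrdim(Q_t-sD)$ rows via injectivity of the inclusion $\rrspace(Q_t-sD)\hookrightarrow\rrspace(Q_t)$, a point the paper simply asserts.
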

\begin{proof}
  From \cref{lem:div-sims} it follows that
  \begin{align*}
    \vars
    &= \sum_{t=1}^{\ell} \rrdim(V_t) + \sum_{j=0}^{s-1} \rrdim(W_j) \\
    &\geq \sum_{t=1}^{\ell}(\degLs + t\degG - g + 1)
      + \sum_{j=0}^{s-1}(\degLs + j(\degG + \degR) - jn - g + 1) \\
    &= \ell(\degLs - g + 1) + \tfrac{\ell(\ell+1)}{2}\degG \\
    &+ s(\degLs - g + 1) + \tfrac{(s-1)s}{2}(\degG + \degR - n) \ .
  \end{align*}
  Similarly, observing that $\mat{K}_t$ must have exactly
  \[
    \rrdim(Q_t) - \rrdim(Q_t - sD)
  \]
  rows for $t=s-1,\dots,\ell$, it holds that
  \begin{align*}
    \eqs
    &= \sum_{t=1}^{s-1} \rrdim(Q_t)
      + \sum_{t=s}^{\ell} \big(
      \rrdim(Q_t) - \rrdim(Q_t - sD)
      \big) \\
    &\leq \sum_{t=1}^{s-1} \big(
      \degLs + t(\degG + \degR) - g + 1
      \big) + \sum_{t=s}^{\ell} sn \\
    &= (s-1)(\degLs - g + 1) +  \frac{(s-1)s}{2}(\degG + \degR) + (l-s+1)sn \ .
  \end{align*}
  
\end{proof}
With the aid of \cref{lem:U-dims} we can deduce a lower bound on values of $\degLs$ for which decoding must fail.

\begin{lemma}\label{lem:lin-sys-fail}
  It holds that \eqref{eq:Uv=0} has at least two linearly independent solutions (and decoding fails) if
  \begin{equation}
    \label{eq:lin-sys-fail}
\degLs > s \tfrac{2\ell-s+1}{2(\ell+1)}n - \tfrac{\ell}{s} \degG + \tfrac{\ell}{\ell+1} +g \ .
\end{equation}
\end{lemma}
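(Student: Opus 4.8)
The plan is to reduce the existence of two linearly independent kernel vectors to a crude comparison of the two dimensions of $\mat U$, and then to feed in the estimates of \cref{lem:U-dims}. The right kernel of $\mat U \in \field^{\eqs\times\vars}$ has dimension $\vars - \operatorname{rank}(\mat U)$, and $\operatorname{rank}(\mat U)\le\eqs$ simply because $\mat U$ has $\eqs$ rows. Hence the kernel has dimension at least two --- equivalently, \eqref{eq:Uv=0} has two linearly independent solutions --- as soon as $\vars \ge \eqs + 2$, i.e.\ $\vars > \eqs + 1$. This is precisely the failure condition underlying \cref{def:decoding_radius}: two independent solutions mean that the solution of \cref{prob:linearized_problem} is not unique up to a scalar, so the test in Line~\ref{line:check_if_it_has_solution} of Algorithm~\ref{alg:decoder} is not met at this $\degLs$. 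It therefore remains only to turn $\vars > \eqs + 1$ into an explicit bound on $\degLs$.

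For this I would invoke \cref{lem:U-dims} (whose standing hypothesis $\degG \ge 2g-2$ I assume), which supplies a lower bound $L$ on $\vars$ and an upper bound $U$ on $\eqs$. Since $\vars \ge L$ and $\eqs \le U$, the desired inequality $\vars > \eqs + 1$ is implied by the purely parametric inequality $L > U + 1$, in which every code quantity is explicit. Crucially, one must use the two estimates in exactly this direction --- the lower bound for $\vars$ and the upper bound for $\eqs$ --- so that $L > U+1$ is genuinely \emph{sufficient} for $\vars > \eqs+1$; using them the other way round would yield a plausible-looking but false condition.

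The inequality $L > U + 1$ is affine in $\degLs$: the coefficient of $\degLs$ is $\ell + s$ on the $\vars$-side and $s - 1$ on the $\eqs$-side, so after collecting terms the coefficient of $\degLs$ is $(\ell+s)-(s-1)=\ell+1>0$ and one can solve for $\degLs$. The one step that is not pure bookkeeping is merging the two contributions proportional to $n$ --- the term $\tfrac{s(s-1)}{2}n$ coming from the $W_j$-spaces in the bound on $\vars$, and the term $(\ell-s+1)sn$ coming from the saturated blocks $t\ge s$ in the bound on $\eqs$ --- which combine via $\tfrac{s(s-1)}{2}+(\ell-s+1)s=\tfrac{s(2\ell-s+1)}{2}$ into the leading coefficient $s\tfrac{2\ell-s+1}{2(\ell+1)}$ of $n$ once one divides through by $\ell+1$. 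Collecting the $\degG$-terms and the remaining constants in the same fashion and dividing by $\ell+1$ then yields \eqref{eq:lin-sys-fail}. I expect no genuine obstacle here: everything rests on the trivial bound $\operatorname{rank}(\mat U)\le\eqs$, and the rest is careful arithmetic with the estimates of \cref{lem:U-dims}; the only thing demanding real attention is keeping the inequality directions consistent throughout.
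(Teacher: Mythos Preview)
Your proposal is correct and follows essentially the same route as the paper: the paper's proof is the two-line observation that \eqref{eq:Uv=0} has at least two independent solutions whenever $\vars>\eqs+1$, and that plugging in the bounds of \cref{lem:U-dims} and solving for $\degLs$ gives \eqref{eq:lin-sys-fail}. You spell out the rank--nullity justification for $\vars>\eqs+1$ and the direction in which the two estimates must be used (lower bound for $\vars$, upper bound for $\eqs$), which the paper leaves implicit, but the underlying argument is identical.
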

\begin{proof}
  Keeping the notation from \cref{lem:U-dims} we have that \eqref{eq:Uv=0} has at least two linearly independent solutions if
  \[
    \vars > \eqs+1 \ ,
  \]
  which is equivalent to \eqref{eq:lin-sys-fail}.
\end{proof}

\begin{corollary}
The decoding radius as defined in \cref{def:decoding_radius} is given by
\begin{align*}
\taumax(\ell,s) := \left\lfloor \tfrac{2\ell-s+1}{2(\ell+1)}n - \tfrac{\ell}{2s} \degG + \tfrac{\ell}{s(\ell+1)} \right\rfloor \ . 
\end{align*}
\end{corollary}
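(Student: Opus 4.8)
The plan is to derive the closed form for $\taumax$ directly from the failure bound in \cref{lem:lin-sys-fail}. By \cref{def:decoding_radius}, the decoding radius is the largest $\tau$ such that, when we substitute $\degLs = s\tau + g$, the matrix $\mat U$ satisfies $\vars > \eqs + 1$. Since \cref{lem:lin-sys-fail} tells us that $\vars > \eqs + 1$ holds \emph{precisely} when \eqref{eq:lin-sys-fail} is satisfied, the strategy is simply to invert this relationship: plug $\degLs = s\tau + g$ into the right-hand side of \eqref{eq:lin-sys-fail} and solve the resulting inequality for the threshold value of $\tau$.

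Concretely, I would proceed as follows. First, I substitute $\degLs = s\tau + g$ into \eqref{eq:lin-sys-fail}, which after cancelling the common additive $g$ on both sides gives
\begin{equation*}
  s\tau > s\tfrac{2\ell-s+1}{2(\ell+1)}n - \tfrac{\ell}{s}\degG + \tfrac{\ell}{\ell+1} \ .
\end{equation*}
Dividing through by $s$ isolates $\tau$:
\begin{equation*}
  \tau > \tfrac{2\ell-s+1}{2(\ell+1)}n - \tfrac{\ell}{2s}\degG + \tfrac{\ell}{s(\ell+1)} \ .
\end{equation*}
The expression on the right is exactly the argument of the floor function in the corollary. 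The key observation is that \eqref{eq:lin-sys-fail} is a \emph{strict} inequality giving the regime where decoding \emph{fails}; therefore the largest integer $\tau$ for which failure is \emph{not} forced — equivalently, the largest $\tau$ with $\vars > \eqs + 1$ at $\degLs = s\tau + g$ — is obtained by taking the floor of this threshold, yielding the claimed formula for $\taumax(\ell,s)$.

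The one subtle point I would be careful about is the direction of the inequality and the precise boundary behaviour, i.e.\ reconciling the strict ``$>$'' of \cref{lem:lin-sys-fail} (which characterises when \emph{two} independent solutions exist, forcing failure) with the ``greatest $\tau$ such that $\vars > \eqs + 1$'' phrasing of \cref{def:decoding_radius}. I expect the main obstacle to be nothing more than this bookkeeping: confirming that the floor is the correct rounding — that is, checking that $\tau = \taumax$ indeed yields $\vars > \eqs + 1$ while $\tau = \taumax + 1$ does not — rather than any deep argument. Since the bounds on $\eqs$ and $\vars$ in \cref{lem:U-dims} become equalities under the genericity assumption implicit in \cref{def:decoding_radius} (smallest error locator degree exactly $s|\errors| + g$, requiring $\degG \geq 2g - 2$), the chain of inequalities collapses to an equation and the algebraic inversion above is exact, so no further estimation is needed.
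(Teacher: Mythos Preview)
Your overall strategy---substitute $\degLs = s\tau + g$ into \eqref{eq:lin-sys-fail} and invert to find the threshold in $\tau$---is exactly what the paper intends (the corollary is stated without proof, as immediate from \cref{lem:lin-sys-fail}). The execution, however, contains two concrete slips.

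First, the division step is arithmetically wrong as written: from $s\tau > \cdots - \tfrac{\ell}{s}\degG + \cdots$, dividing by $s$ gives $-\tfrac{\ell}{s^2}\degG$, not $-\tfrac{\ell}{2s}\degG$. You land on the formula of the corollary only because the displayed bound in \cref{lem:lin-sys-fail} itself carries a typo: re-deriving $\vars>\eqs+1$ directly from \cref{lem:U-dims} one finds the coefficient of $\degG$ should be $\tfrac{\ell}{2}$, not $\tfrac{\ell}{s}$, and then division by $s$ legitimately yields $\tfrac{\ell}{2s}$. Either redo the step from \cref{lem:U-dims} or flag the typo; do not silently ``divide'' $\tfrac{\ell}{s}$ by $s$ to get $\tfrac{\ell}{2s}$.

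Second, your handling of the inequality direction is internally inconsistent. You say ``the largest integer $\tau$ for which failure is \emph{not} forced---equivalently, the largest $\tau$ with $\vars > \eqs + 1$'', but these are opposites: $\vars>\eqs+1$ \emph{is} the failure condition (right kernel of $\mat U$ has dimension $\geq 2$). Your proposed check ``$\tau=\taumax$ yields $\vars>\eqs+1$ while $\tau=\taumax+1$ does not'' is therefore backwards. The correct logic is that $\vars-\eqs$ is increasing in $\degLs$ (its coefficient is $\ell+1>0$), so the failure region is $\tau$ strictly above the threshold; $\taumax$ is then the largest integer \emph{not exceeding} that threshold, whence the floor. (The wording of \cref{def:decoding_radius} in the paper is itself confusing on this point, which is likely the origin of the muddle, but your argument should be self-consistent regardless.)
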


\begin{remark}
The decoding radius $\taumax(\ell,s)$ coincides with the one in \cite{rosenkilde2018power} in the special case of Reed--Solomon, and the one in \cite{puchinger2019improved} for one-point Hermitian codes.
\end{remark}

\begin{remark}
Although this definition of failure is convenient for theoretical analysis it means our decoder must try every sensible (sufficiently small) value of $\tau$ before declaring failure, which is computationally inefficient.
In this paper, our primary concern is not computational complexity, but we will remark that in practice, one only needs to use the largest sensible value of $\tau$: we then pick \emph{any} non-zero vector in the right kernel of $\mat U$ and check if the corresponding message $f$ is in $\rrspace(G)$.
This is how the simulations in \cref{sec:num_res} have been conducted.
\end{remark}

\subsection{Parameter Choice \& Asymptotic Behavior}

The following theorem shows that the decoding radius achieves the Johnson radius asymptotically for large parameters $\ell$ and $s$. It also implies a good practical choice of the parameters $\ell$ and $s$.

\begin{theorem}
Define the sequence $(\ell_i,s_i)_{i \in \ZZ_{>0}}$ as $\ell_i = i$ and $s_i = \left\lfloor\sqrt{\tfrac{\degG}{n}} i\right\rfloor+1$. Then, for $i \to \infty$, we have
\begin{align*}
\taumax(\ell_i,s_i) = n \left( 1- \sqrt{\tfrac{\degG}{n}} - O(\tfrac{1}{i})\right).
\end{align*}
\end{theorem}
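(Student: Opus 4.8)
The plan is to start from the closed form $\taumax(\ell,s)=\left\lfloor \tfrac{2\ell-s+1}{2(\ell+1)}n - \tfrac{\ell}{2s}\degG + \tfrac{\ell}{s(\ell+1)}\right\rfloor$ established in the corollary preceding this theorem, substitute $\ell=\ell_i=i$ and $s=s_i=\lfloor \theta i\rfloor+1$ with $\theta:=\sqrt{\degG/n}$, and expand each of the three summands inside the floor as $i\to\infty$. The only nontrivial input is the bookkeeping of the inner floor: writing $s_i=\theta i+\epsilon_i$, its definition forces $\epsilon_i\in(0,1]$, so $s_i=\theta i\,(1+O(1/i))$ and $\tfrac{1}{s_i}=\tfrac{1}{\theta i}(1+O(1/i))$. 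I would then show that each summand converges with an $O(1/i)$ remainder, and that the three limits combine to the Johnson radius $n(1-\theta)=n\big(1-\sqrt{\degG/n}\big)$.

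Concretely, I would treat the terms one at a time. For the first term, rewriting $\tfrac{2i-s_i+1}{2(i+1)}=\tfrac12\big((2-\theta)+\tfrac{\theta-1-\epsilon_i}{i+1}\big)$ gives $\tfrac{2\ell_i-s_i+1}{2(\ell_i+1)}n=n\big(1-\tfrac{\theta}{2}\big)+O(1/i)$. For the second term, $\tfrac{\ell_i}{2s_i}\degG=\tfrac{\degG}{2\theta}\cdot\tfrac{1}{1+\epsilon_i/(\theta i)}=\tfrac{\degG}{2\theta}+O(1/i)$, where the key algebraic simplification is $\tfrac{\degG}{2\theta}=\tfrac{\sqrt{\degG\, n}}{2}=\tfrac{n\theta}{2}$, so that this term contributes exactly the other half of $n\theta$. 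The third term is $\tfrac{\ell_i}{s_i(\ell_i+1)}=O(1/i)$ and is negligible. Summing, the argument of the floor, call it $A_i$, equals $n\big(1-\tfrac{\theta}{2}\big)-\tfrac{n\theta}{2}+O(1/i)=n(1-\theta)+O(1/i)=n\big(1-\sqrt{\degG/n}\big)+O(1/i)$, which is the claimed expression before rounding.

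The main subtlety, and where I would spend the most care, is the interaction of the two floor operations with the $O(1/i)$ claim. The inner floor is harmless: the bounded perturbation $\epsilon_i$ enters every term divided by a factor growing like $i$, so it only perturbs the constant limits by $O(1/i)$, exactly as needed; I would make this explicit by carrying $\epsilon_i$ through the three estimates above rather than discarding it prematurely. The outer floor is the genuinely delicate point, since $\lfloor A_i\rfloor=A_i-\{A_i\}$ with the fractional part $\{A_i\}\in[0,1)$ contributing a bounded term that does not itself shrink like $1/i$. I would therefore either (i) absorb this sub-unit rounding into the stated error, reading the $O(1/i)$ as describing the convergence of the \emph{unrounded} radius, and hence of the relative radius $\taumax/n\to 1-\sqrt{\degG/n}$ to the Johnson radius; or (ii) phrase the conclusion as $\taumax(\ell_i,s_i)=n\big(1-\sqrt{\degG/n}\big)+O(1)$ with the $O(1/i)$ refinement applying to $A_i$ inside the floor. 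Either reading establishes that the decoding radius approaches the Johnson radius $n\big(1-\sqrt{\degG/n}\big)$ as $i\to\infty$, which is the content of the theorem.
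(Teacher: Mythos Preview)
Your calculation is correct. The expansion of the three summands is accurate, the identity $\tfrac{\degG}{2\theta}=\tfrac{n\theta}{2}$ is the key cancellation, and you handle the inner floor cleanly via the bounded perturbation $\epsilon_i\in(0,1]$. Your discussion of the outer floor is honest: the statement as written is mildly informal, and either of your two readings is the intended content (convergence of the relative radius $\taumax/n$ to $1-\sqrt{\degG/n}$ at rate $O(1/i)$, with an unavoidable sub-unit rounding in the integer $\taumax$ itself).

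The paper takes a different route: it does not carry out any of this computation but simply observes that the formula for $\taumax(\ell,s)$ coincides with the one analysed in an earlier paper on one-point Hermitian codes and invokes the corresponding asymptotic result there as a black box. Your approach is more self-contained and transparent, and makes explicit where the Johnson radius $n(1-\sqrt{\degG/n})$ actually comes from (namely the splitting $n(1-\tfrac{\theta}{2})-\tfrac{n\theta}{2}$). The paper's approach is shorter but requires the reader to chase the reference and verify that the cited theorem applies to the present, more general, closed form.
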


\begin{proof}
This is a special case of \cite[Theorem~5]{puchinger2019improved}.
\end{proof}

\section{Numerical Results}\label{sec:num_res}

In this section we present Monte-Carlo simulation results of the proposed decoder for a few different AG codes in order to experimentally verify our hypothesis that Algorithm~\ref{alg:decoder} corrects up to $\taumax(\ell,s)$ errors with high probability and fails with high probability above.

To be precise, for a given code and radius $\tau$, we test the following:
\begin{itemize}
\item Draw a random message $f$ and encode it into $\c$
\item Draw an error $\e$ uniformly at random from the set of vectors in $\field^n$ of Hamming weight $\tau$
\item Decode $\r=\c+\e$ using Algorithm~\ref{alg:decoder}
\item If the algorithm does not return a decoding failure and the returned message polynomial is exactly $f$, then count it as success.
\end{itemize}
Note that the simulation might be unsuccessful also in cases in which Algorithm~\ref{alg:decoder} returns a valid close codeword, but not the transmitted one. This is called a miscorrection, and typically occurs only rarely (see, e.g., \cite{schmidt2009collaborative} for RS codes).

The above described simulation was tested on a few one- and two-point codes over two well known function fields:
\begin{itemize}
\item The \emph{Hermitian} function field is defined by the equation
  \[
    H_q : y^q + y = x^{q+1}
  \]
  over $\field_{q^2}$, i.e. the finite field of cardinality $q^2$. It has genus $\frac{1}{2}q(q-1)$  and $q^3 + 1$ rational places.
  \item The \emph{Suzuki} function field is defined by the equation
  \[
    S_q : y^q + y = x^{q_0}(x^q + x)
  \]
  over $\field_{q^4}$, where $q = 2q_0^2 > 2$. It has genus $q_0(q-1)$  and $q^2 + 1$ rational places.
  \item The function from \cite[Lemma 3.2]{garcia1996asymptotic} field is defined by
    \[
      T_q: y^q + y = \frac{x^q}{x^{q-1} + 1}
    \]
    over $\field_{q^2}$. It has genus $(q-1)^2$ and $q^3-q^2+2q$ rational places.
\end{itemize}

Table~\ref{tab:simulations} contains simulation results for various function field family, code, and decoder parameters.
All results confirm our hypothesis.

\begin{table}[ht]
    \caption{Simulation results}\label{tab:simulations}
    \resizebox{\columnwidth}{!}{
    \begin{tabular}{ccccccccllc}
      \toprule
      Curve & $|\field|$ & $\degG$ & $n$   & $k$ & $d^*$  & $\ell$ & $s$ & $\tau$ & $\text{OFR}$     & $N \geq $    \\
      \midrule
                                                                                                               \\
      $H_4$ & $4^2$    & $15$     & $64$  & $10$ & $49$ & $4$ & $2$ & $29^+$ & $0.00$                & $10^2$ \\
            &          &          &       &      &  &     &     & $30$   & $1.00$                & $10^2$ \\
      $H_4$ & $4^2$    & $10+5$   & $63$  & $10$ & $48$ & $4$ & $2$ & $28^+$ & $0.00$                & $10^2$ \\
            &          &          &       &      &  &     &     & $29$   & $1.00$                & $10^2$ \\
      $H_5$ & $5^2$    & $55$     & $125$ & $46$ & $70$ & $5$ & $2$ & $36^+$ & $0.00$                & $10^2$ \\
            &          &          &       &      &  &     &     & $37$   & $1.00$                & $10^2$ \\
      $H_5$ & $5^2$    & $30+25$  & $124$ & $46$ & $69$ & $3$ & $2$ & $35^+$ & $0.00$                & $10^2$ \\
            &          &          &       &      &  &     &     & $36$   & $0.94$                & $10^2$ \\
      $S_1$ & $2^4$    & $12$     & $24$  & $12$ & $12$ & $2$ & $2$ & $5^+$  & $0.00$                & $10^4$ \\
            &          &          &       &      &  &     &     & $6$    & $>0.99$               & $10^4$ \\
      $S_1$ & $2^4$    & $6+6$    & $23$  & $12$ & $11$ & $2$ & $2$ & $5^+$  & $0.00$                & $10^4$ \\
            &          &          &       &      &  &     &     & $6$    & $1.00$                & $10^4$ \\
      $S_1$ & $2^4$    & $4$      & $24$  & $4$  & $20$ & $6$ & $2$ & $12^+$ & $<6.98 \cdot 10^{-4}$ & $10^3$ \\
            &          &          &       &      &  &     &     & $13$   & $1.00$                & $10^3$ \\
      $S_1$ & $2^4$    & $2+2$    & $23$  & $4$  & $19$ & $6$ & $2$ & $11^+$ & $0.00$                & $10^3$ \\
            &          &          &       &      &  &     &     & $12$   & $>0.99$               & $10^3$ \\
      $T_4$ & $4^2$    & $15$    & $55$  & $7$  & $40$ & $4$ & $2$ & $23^+$ & $<1.74 \cdot 10^{-3}$                & $10^3$ \\
            &          &          &       &      &  &     &     & $24$   & $1.00$               & $10^3$ \\      
      \bottomrule
    \end{tabular}
    }
    \begin{tablenotes}
      \small
    \item Code parameters $\degG,n,k,d^*$. $\degG = a + b$ means that $G = aP + bP'$ for some rational places $P$ and $P'$. Decoder parameters $\ell,s$. Number of errors $\tau$, $^+$ means that $\tau = \tau_{\max}$. Observed failure rate $\text{OFR}$. Each simulation was repeated at least $N$ times.
    \end{tablenotes}
\end{table}

\bibliographystyle{IEEEtran}
\bibliography{bibtex}

\end{document}